\newcommand{\bN}{\mathbb{N}}
\newcommand{\bK}{\mathbb{K}}
\newcommand{\ord}{\mathrm{ord\,}}
\newtheorem{Theorem}{Theorem}
\newtheorem{Lemma}{Lemma}
\newtheorem{ex}{Example}
\title{On some regularity condition}
\author{Beata Gryszka and Janusz Gwo\'zdziewicz}
\begin{document}
\maketitle
\footnotetext{
      \begin{minipage}[t]{4.2in}{\small
    2020 {\it Mathematics Subject Classification:\/} 14A99, 14P99\\
       Key words and phrases: regular function, Bertini's theorem.}
       \end{minipage}}

\begin{abstract}
Let $\bK$ be an uncountable field of characteristic zero and let $f$ be a function from $\bK^n$ to $\bK$.  We show that if the restriction of $f$ to every affine plane $L\subset\bK^n$ is regular, then $f$ is a regular function.
\end{abstract}

\section{Introduction}

In this paper open means always open in Zariski topology. 

Let $\bK$ be a field. 
We say that a function $f:\bK^n\to \bK$ is {\it regular at $a\in\bK^n$} 
if there exist polynomials $G$, $H\in\mathbb{K}[x_1,\dots,x_n]$ and an open 
neighborhood $U$ of $a$ in $\bK^n$ such that $H(x)\neq0$ and $f(x)=G(x)/H(x)$ for every $x\in U$. 
A function $f:\bK^n\to \bK$ is called {\it regular} if is regular at every point of $\bK^n$.  

We say that $f:\bK^n\to \bK$ has a {\it rational representation} $G/H$ 
if there exist polynomials $G$, $H\in\mathbb{K}[x_1,\dots,x_n]$ and a nonempty open 
set $U\subset\bK^n$ such that $H(x)\neq0$ and $f(x)=G(x)/H(x)$ for every $x\in U$. 

In order to keep the presentation on elementary level let us prove the following well-known fact 
(for more general case see \cite{To}).

\begin{Lemma}
 Every regular function $f$ from $\bK^n$ to $\bK$ can be written as a quotient 
$f=G/H$ of two polynomials $G,H\in\mathbb{K}[x_1,\dots,x_n]$ with nowhere vanishing 
denominator. 
\end{Lemma}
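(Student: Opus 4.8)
The plan is to patch the local rational representations of $f$ together into a single rational function, and then to read off from its reduced form that the denominator can be taken nowhere vanishing. Throughout I use that the ground field $\bK$ is infinite, so that $\bK^n$ with the Zariski topology is irreducible; in particular every nonempty open set is dense, and a polynomial that vanishes on a nonempty open subset of $\bK^n$ is identically zero. Since $f$ is regular, for each $a\in\bK^n$ I fix an open neighborhood $U_a$ of $a$ and polynomials $G_a,H_a\in\bK[x_1,\dots,x_n]$ with $H_a$ nowhere vanishing on $U_a$ and $f=G_a/H_a$ on $U_a$.

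First I would check that all these local representations determine the same element of the field of fractions. Given $a,b\in\bK^n$, the open set $U_a\cap U_b$ is nonempty, and on it $G_aH_b=H_aG_b$; by density this equality of polynomials holds everywhere, so $G_a/H_a=G_b/H_b$ in $\bK(x_1,\dots,x_n)$. Hence there is one rational function $\varphi\in\bK(x_1,\dots,x_n)$ with $\varphi=G_a/H_a$ for every $a$. Using that $\bK[x_1,\dots,x_n]$ is a unique factorization domain, I write $\varphi=P/Q$ in lowest terms, i.e.\ with $\gcd(P,Q)=1$.

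It remains to see that $Q$ has no zeros in $\bK^n$, and then to conclude. Fix $a\in\bK^n$. From $P/Q=G_a/H_a$ we get the polynomial identity $QG_a=PH_a$, so $Q$ divides $PH_a$; since $\gcd(P,Q)=1$, Euclid's lemma gives $Q\mid H_a$, hence writing $H_a=QR$ the condition $H_a(a)\ne0$ forces $Q(a)\ne0$. As $a$ was arbitrary, $Q$ vanishes nowhere. Finally, on each $U_a$ we have $f=G_a/H_a=P/Q$, and since $\bigcup_{a\in\bK^n}U_a=\bK^n$ this shows $f=P/Q$ on all of $\bK^n$, which is the desired representation. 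The only point needing real care is the gluing step in the second paragraph, where one genuinely uses the irreducibility of $\bK^n$ (equivalently, that $\bK$ is infinite); once the single rational function $\varphi$ is in hand, the coprimality argument making its denominator nowhere zero is short.
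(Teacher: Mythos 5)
Your argument is correct and essentially the paper's own: both use that over an infinite field nonempty Zariski-open sets are dense (so the local representations glue to a single rational function), and then unique factorization / coprimality to see that the reduced denominator cannot vanish at any point. The one omission is that the lemma is stated for an arbitrary field $\bK$, and your blanket assumption that $\bK$ is infinite skips the finite case, which the paper dispatches in a single line: over a finite field every function $\bK^n\to\bK$ is already a polynomial function, so the statement is trivial there.
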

\begin{proof}
If $\bK$ is finite  then every $f:\bK^n\to \bK$ is a polynomial function.
Assume that $\bK$ is infinite. 
Let $f:\bK^n\to\bK$ be a regular function and $a_1$, $a_2$ be points in $\bK^n$. 
The function $f$ has rational representations $G_i/H_i$ 
such that $G_i$, $H_i$ are coprime and $f(a_i)=G_i(a_i)/H_i(a_i)$ for $i=1,2$.
Thus $G_1H_2=G_2H_1$ on a~non-empty open subset of $\bK^n$. 
Since the field $\bK$ is infinite, we get $G_1H_2=G_2H_1$ in the ring $\bK[x_1,\dots,x_n]$.
This ring is a unique factorization domain, hence there exists a non-zero constant $c\in\bK$ 
such that $H_2=cH_1$. Therefore $f(a_2)=G_1(a_2)/H_1(a_2)$. 
Since $a_2$ was arbitrary, we get $f=G_1/H_1$.
\end{proof}

Observe that if $\frac{FG}{FH}$ is a rational representation of $f$ then $\frac{G}{H}$ is also
a rational representation of $f$.  Hence we can always assume that a rational representation  
is a reduced fraction.

\section{Results}
The main results of this note are  Theorems \ref{Tw} and \ref{Tw:nieprzeliczane}.

\begin{Theorem}\label{Tw}
Let $\bK$ be a field of characteristic zero. Assume that a function $f:\bK^n\to \bK$ ($n\geq 2$) 
has a rational representation.  If for every vector plane $L\subset \bK^n$
the restriction $f|_L$ is regular, then  $f$ is regular at the origin. 
\end{Theorem}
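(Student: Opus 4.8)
The plan is to fix a reduced rational representation $G/H$ of $f$ on a non-empty (hence dense) open set $U$, with $G,H\in\bK[x_1,\dots,x_n]$ coprime and $U\subseteq\{H\neq 0\}$. Besides Lemma~1, the only ingredient I need is a minor variant of its proof: if $g$ has a reduced rational representation $P/Q$ and $g$ is regular at a point $b$, then $Q(b)\neq 0$ (write $g=P'/Q'$ near $b$ with $Q'(b)\neq 0$, deduce $PQ'=P'Q$ on a dense set hence as polynomials, and conclude $Q\mid Q'$ by coprimality). If $n=2$ the only vector plane is $\bK^2$ itself, so $f=f|_{\bK^2}$ is already regular; from now on assume $n\geq 3$. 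The argument then splits into two parts: (I) show $H(0)\neq 0$, and (II) show $f=G/H$ on a whole neighbourhood of the origin, which by definition makes $f$ regular at $0$.

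For part (I) I would argue by contradiction, assuming $H(0)=0$, and exhibit a vector plane $L\subset\bK^n$ with three properties: (a) $L\cap U\neq\emptyset$; (b) $H$ does not vanish identically on $L$; (c) the restrictions $G|_L$ and $H|_L$ are coprime in the polynomial ring $\bK[L]\cong\bK[s,t]$. Each of (a) and (b) fails only on a proper closed subset of the Grassmannian of vector planes in $\bK^n$ (the union of all vector planes is $\bK^n$, and $H\not\equiv 0$), and (c) holds generically because coprimality of $G$ and $H$ forces $V(G)\cap V(H)$ to have dimension at most $n-2$, so a generic vector plane meets it in dimension at most $0$ and therefore contains no curve lying on both $V(G)$ and $V(H)$; since $\bK$ is infinite such an $L$ can be chosen with coordinates in $\bK$. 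For this $L$, after shrinking $U$ slightly, $G|_L/H|_L$ is a reduced rational representation of $f|_L$; but $f|_L$ is regular by hypothesis, so its reduced denominator $H|_L$ cannot vanish at the origin of $L$, contradicting $H|_L(0)=H(0)=0$.

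For part (II), granted $H(0)\neq 0$, the set $V:=\{H\neq 0\}$ is an open neighbourhood of $0$ on which $H$ is nowhere zero, and it suffices to prove $f=G/H$ on $V$. Given $a\in V$, I would pick a vector plane $L$ containing both $0$ and $a$ with $L\cap U\neq\emptyset$; this is possible because the vector planes containing the line $\bK a$ already cover $\bK^n$. On the irreducible set $L\cap V$ the functions $f|_L$ and $(G/H)|_L$ are both regular and they coincide on the dense open subset $L\cap U$, hence they coincide on all of $L\cap V$, and in particular $f(a)=G(a)/H(a)$.

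The step I expect to be the real obstacle is (c): showing that a generic two-dimensional linear section preserves the coprimality of $G$ and $H$, and arranging for this generic plane to be defined over $\bK$ itself. This is exactly where the dimension count on the Grassmannian and the standing hypothesis that $\bK$ has characteristic zero (used only to guarantee $\bK$ is infinite, so that $\bK$-rational planes are plentiful) enter. Everything else is soft: uniqueness of reduced representations, and the fact that two regular functions on an irreducible variety agreeing on a dense set must be equal.
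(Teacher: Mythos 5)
Your proposal is sound in substance, but it takes a genuinely different route from the paper. You obtain the key point (the reduced denominator $H$ cannot vanish at $0$) by a Bertini-type genericity argument: a sufficiently general vector plane $L$ meets $V(G)\cap V(H)$ in dimension at most $0$, so $G|_L$ and $H|_L$ remain coprime, and then regularity of $f|_L$ at the origin forces $H|_L(0)\neq0$. The paper deliberately avoids this machinery: it first reduces to $H$ irreducible (Lemma~\ref{lem1}), normalizes coordinates so that $H$ is monic in $x_n$ and $H(0,\dots,0,x_n)$ has no multiple factors besides a power of $x_n$ (Lemma~\ref{lem2}, where characteristic zero is used through the discriminant), factors $H$ by Hensel's lemma in $\bK[[x_1,\dots,x_{n-1}]][x_n]$, and picks the good plane explicitly as the non-vanishing locus of one polynomial $\pi$; the contradiction is a degree count showing the denominator $W$ of $f|_L$ would have to be the restriction of a non-polynomial Hensel factor $A_I$. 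Your route buys brevity and conceptual clarity (it is exactly Bertini for coprimality of plane sections through a fixed point) at the cost of heavier standard tools (incidence varieties, fibre-dimension arguments, density of $\bK$-rational planes); the paper's buys elementarity and an effective choice of plane, and it notably needs characteristic zero in Lemma~\ref{lem2}, whereas your argument would work over any infinite field. Your part (II) is essentially the paper's Lemma~\ref{lem0}, done with planes instead of lines.

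Two points in your step (c) need more care than your sketch gives them, and they are the real content of that step. First, ``$L$ contains no curve lying on both $V(G)$ and $V(H)$'' must be read over the algebraic closure: over a non-closed field a common irreducible factor of $G|_L$ and $H|_L$ (think of $s^2+t^2$ over $\mathbb{R}$) may have no $\bK$-zeros at all, so controlling the $\bK$-points of $V(G)\cap V(H)\cap L$ does not by itself give coprimality of the restrictions. The repair is standard — coprimality of $G$ and $H$ persists in $\overline{\bK}[x_1,\dots,x_n]$, run the dimension count there, and then use that $\bK$-rational planes are Zariski dense in the Grassmannian to descend — but this is not merely the side issue of ``choosing $L$ with coordinates in $\bK$'' as you present it. Second, your planes are constrained to pass through the origin, which may well lie on $V(G)\cap V(H)$, so the assertion that a generic such plane meets this codimension-two set in dimension at most $0$ is not the generic-plane statement; it requires the incidence count (planes through $0$ containing a fixed nonzero point of $V(G)\cap V(H)$ form a family of dimension $n-2$, so the bad locus in the $2(n-2)$-dimensional Grassmannian has dimension at most $2(n-2)-1$). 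Both gaps are fixable by these standard arguments, after which your proof goes through.
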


%

Theorem~\ref{Tw} does not hold under a weaker assumption  ``regular at the origin''.
 
\begin{ex}
Let $f:\mathbb{R}^3\to\mathbb{R}$,  $f(x,y,z)=x$ if $(y-x^2)^2+(z-x^3)^2=0$ and $f(x,y,z)=0$ otherwise.
The restriction of $f$ to any vector plane is regular at the origin. However $f$ is not regular at the origin.  
\end{ex}

The following example shows that the assumption about a rational representation  
in Theorem \ref{Tw} is crucial. 

\begin{ex}
Let $\bK$ be a countable field. Consider the sequence of all pairwise different 
hyperplanes  $L_1, L_2,\dots$  in $\bK^n$ with equations $l_i=0$ for $i=1,2,\dots$. 
Then  $\bK^n$ is the union  $\bigcup_{i=1}^{\infty} L_i$. 
Let us define $$f=\sum_{i=1}^{\infty}
\prod_{j=1}^i l_j.$$ 

The function $f$ restricted to any affine hyperplane is a polynomial function.  However $f$ is not
regular.  Indeed, suppose that $f=G/H$. Let  $d$ be the degree of $G$. 
Then the restriction  of $G=fH$ to the hyperplane $L_{d+2}$ has degree at least $d+1$, 
but on the other hand, this degree is bounded by the degree of $G$. 
We arrived to contradiction.
\end{ex}

For uncountable fields we can say more.

\begin{Theorem}\label{Tw:nieprzeliczane}
Let $\bK$ be an uncountable field of characteristic zero and 
let $f$ be a function from $\bK^n$ to $\bK$ $(n\geq 2)$.
If for every affine plane  $L\subset \bK^n$, the restriction  $f|_L$ is regular,
then $f$ is regular.
\end{Theorem}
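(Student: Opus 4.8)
The strategy is to reduce the affine statement to Theorem \ref{Tw}. The key point missing from the hypothesis of Theorem \ref{Tw} is that $f$ should have a rational representation; once we have that, Theorem \ref{Tw} (applied after a translation to an arbitrary point $a$) gives regularity at $a$, and since $a$ is arbitrary, $f$ is regular. So the whole proof is: \emph{produce a rational representation of $f$ using uncountability}.

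To do this, I would first reduce to the case $n=2$. If $f|_L$ is regular on every affine plane $L\subset\bK^n$, then in particular $f$ restricted to every affine $2$-plane is regular; assuming the theorem for $n=2$, every restriction of $f$ to a $2$-plane is a quotient of polynomials with nonvanishing denominator (by Lemma 1). For the base case $n=2$: the affine plane $\bK^2$ is \emph{not} a countable union of lines because $\bK$ is uncountable, so one cannot cover $\bK^2$ by the "bad" lines in the style of Example 2. Concretely, fix a generic pencil of parallel lines, say the horizontal lines $L_t=\{y=t\}$, $t\in\bK$. On each such line $f(x,t)=g_t(x)$ is a regular (hence, since $\bK$ is infinite, a \emph{polynomial}) function of $x$; let $d(t)=\deg g_t$. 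Do the same for the vertical lines $\{x=s\}$. The plan is to show $d(t)$ is bounded for $t$ in an uncountable set: write $\bK=\bigcup_{m\ge 0}\{t: d(t)\le m\}$; one of these sets is uncountable. For a common degree bound $m$ attained on an infinite (even uncountable) set $T$ of values of $t$, interpolation in $x$ at $m+1$ fixed points combined with the requirement that for each fixed $x=s$ the function $t\mapsto f(s,t)$ be regular in $t$ should force $f$ to agree with a single bivariate rational function on a large set — this is the heart of the argument.

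More precisely, here is the mechanism I expect to use. Pick $m+1$ distinct values $x_0,\dots,x_m\in\bK$. For $t\in T$, Lagrange interpolation expresses $g_t(x)=\sum_{i} f(x_i,t)\,\ell_i(x)$ where $\ell_i$ are the fixed Lagrange basis polynomials. Now for each fixed $i$, the function $t\mapsto f(x_i,t)$ is the restriction of $f$ to the vertical line $x=x_i$, hence regular in $t$, hence (on an uncountable field, with a rational representation coming again from regularity plus Lemma 1) equal to a rational function $R_i(t)=P_i(t)/Q_i(t)$ for all $t$ outside a finite set. Therefore $F(x,t):=\sum_i R_i(t)\,\ell_i(x)$ is a fixed element of $\bK(x,t)$, and $f(x,t)=F(x,t)$ whenever $t\in T$ avoids the finitely many poles of the $R_i$ \emph{and} $\deg g_t\le m$ — a set of pairs $(x,t)$ that is Zariski-dense in $\bK^2$ because $T$ is uncountable. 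Writing $F=G/H$ in lowest terms, $H$ cannot vanish identically on the graph of $f$ over this dense set, so $f=G/H$ on a nonempty open set: that is the required rational representation. (One must check that the "uncountable $T$ with bounded degree" really exists; this uses that $\bK$ is uncountable while $\bN$ is countable, the pigeonhole step mentioned above.)

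The main obstacle I anticipate is the degree-boundedness step: showing that $\deg g_t$ (or the analogous degree along some pencil) is bounded on an uncountable set, and then that the interpolated rational function $F$ actually \emph{represents} $f$ rather than merely interpolating finitely many slices. The delicate part is bookkeeping of exceptional sets — each vertical-line rational representation $R_i$ is valid only off a finite set, each horizontal slice is a polynomial of controlled degree only for $t\in T$ — and arguing that the intersection of the "good" loci is still Zariski-dense in $\bK^2$, which is exactly where uncountability of $\bK$ is used (a countable union of proper subvarieties cannot cover $\bK^2$, whereas over a countable field it can, per Example 2). Once the rational representation is in hand, invoking Theorem \ref{Tw} at every point $a\in\bK^n$ (after translating $a$ to the origin, which does not affect the hypotheses since affine planes map to affine planes) finishes the proof.
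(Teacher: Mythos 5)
Your top-level plan coincides with the paper's: manufacture a rational representation of $f$ using uncountability, then apply Theorem~\ref{Tw} at every point after a translation. However, the mechanism you propose for the rational representation has a genuine error at its core: over a field of characteristic zero that is not algebraically closed (e.g.\ $\bK=\mathbb{R}$, which the theorem allows), a regular function on a line need \emph{not} be a polynomial --- $x\mapsto 1/(1+x^2)$ is regular on $\mathbb{R}$ --- so your claim that $g_t(x)=f(x,t)$ is a polynomial of some degree $d(t)$ is false, and with it the pigeonhole on $\deg g_t$ and the Lagrange interpolation at $m+1$ points collapse. The repair is exactly the paper's Lemma~\ref{Cramer}: pigeonhole instead on the \emph{type} $(r,s)$ (degrees of numerator and denominator of the reduced fraction representing $f$ on the line), and recover both sets of coefficients from the values of $f$ at $r+s+1$ points via Cramer's rule, so that the slice representations depend rationally on the data.

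Two further gaps. First, the reduction to $n=2$ is not carried out: knowing that $f$ has a rational representation on every $2$-plane does not by itself produce one on $\bK^n$. The paper instead proves Theorem~\ref{Proste} by induction on $n$, and the point your sketch has no substitute for is that the pigeonhole then takes place in $\bK^{n-1}$, where an uncountable set need \emph{not} be Zariski dense (it could sit inside a hyperplane); the paper's Lemma~\ref{Podzial} --- every countable partition of $\bK^{n-1}$ has a member on which only the zero polynomial vanishes --- is precisely the tool that makes the induction work, and your argument only covers the one-dimensional situation where ``uncountable $\Rightarrow$ dense'' is automatic. Second, your final inference ``$f=G/H$ on a Zariski-dense set, hence $f=G/H$ on a nonempty open set'' is not valid: density of the agreement set says nothing about the values of $f$ off that set, and a rational representation requires agreement on an open set. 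The paper closes this by using the hypothesis once more: on each slice $\bK^{n-1}\times\{c\}$ the function $f$ equals some $G_1/H_1$ with nowhere-vanishing $H_1$; since $G/H$ and $G_1/H_1$ agree on the (strongly dense) good set in that slice, they agree as rational functions there, whence $f=G/H$ wherever $H\neq0$. You flag this step as ``delicate bookkeeping'' but do not supply the argument. With these three repairs your outline becomes essentially the paper's proof; the concluding application of Theorem~\ref{Tw} at every translated point is correct as you state it.
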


For $\bK=\mathbb{R}$ Theorem~\ref{Tw:nieprzeliczane} follows directly 
from the Bochnak-Siciak theorem (\cite{BS}) or from \cite[Theorem~6.1]{kollar2018curve}. 
For algebraically closed fields it follows from \cite[Theorem~8.3]{Pa} since in this case 
every regular function is a polynomial function.

The following classical example shows that in Theorem~\ref{Tw:nieprzeliczane} 
affine planes cannot be replaced by affine lines. 

\begin{ex}
Let $f:\mathbb{R}^2\to\mathbb{R}$ be a function defined as follows
$$ f(x,y)=
\begin{cases}
\frac{xy}{x^2+y^2} & \mbox{ for $(x,y)\neq (0,0)$} \\
 0                           & \mbox{ for $(x,y)=(0,0)$} 
\end{cases} .
$$
The restriction of $f$ to any affine line is regular,  but $f$ is not continuous, thus $f$ is not regular.   
\end{ex}

\section{Proofs}

\begin{Lemma}\label{lem0} 
Let $f:\bK^n\to\bK$ be a function with a rational representation $G/H$.
If the restriction of $f$ to every affine line is regular then 
$f(x)=G(x)/H(x)$ for all $x\in\bK^n$ such that $H(x)\neq0$.
\end{Lemma}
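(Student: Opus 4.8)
The plan is to fix an arbitrary point $a\in\bK^n$ with $H(a)\neq0$ and prove that $f(a)=G(a)/H(a)$ by restricting $f$ to a single, carefully chosen affine line through $a$. Write $U$ for the nonempty open set on which $f=G/H$; in particular $H$ is nowhere zero on $U$. If $a\in U$ there is nothing to prove, so assume $a\notin U$, fix any point $b\in U$, and let $\ell$ be the affine line through the two distinct points $a$ and $b$. Parametrize $\ell$ by $\phi(t)=a+tv$ with $\phi(0)=a$, so that $b=\phi(t_0)$ for some $t_0\in\bK$.

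First I would record two facts about the one-variable function $g(t):=f(\phi(t))$. On the one hand, $g$ is the restriction of $f$ to $\ell$ expressed in the affine parameter $t$, hence a regular function on $\bK$; by Lemma~1 we may write $g=p/q$ with $p,q\in\bK[t]$ and $q$ nowhere vanishing on $\bK$. On the other hand, $\phi^{-1}(U)$ is a nonempty (it contains $t_0$), hence cofinite and infinite, Zariski-open subset of $\bK$, and for every $t$ in it we have $g(t)=G(\phi(t))/H(\phi(t))$, because $f=G/H$ holds throughout $U$.

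Next I would compare the two expressions: the polynomial $p(t)H(\phi(t))-q(t)G(\phi(t))$ vanishes on the infinite set $\phi^{-1}(U)$, so it is identically zero, i.e. $p(t)H(\phi(t))=q(t)G(\phi(t))$ in $\bK[t]$. Here the one point that needs attention is that $\ell$ passes through $a\notin\{H=0\}$, so $H(\phi(t))$ is a nonzero polynomial with $H(\phi(0))=H(a)\neq0$; likewise $q(0)\neq0$ since $q$ is nowhere vanishing. Evaluating the polynomial identity at $t=0$ gives $p(0)H(a)=q(0)G(a)$, and dividing by $q(0)H(a)\neq0$ yields $f(a)=g(0)=p(0)/q(0)=G(a)/H(a)$.

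Since $a$ was an arbitrary point of $\{H\neq0\}$, this proves the lemma. The argument is short, and the only real subtlety is the simultaneous choice of a line that meets $U$ (so that the rational representation pulls back to $\ell$) and is not contained in $\{H=0\}$ (so that evaluation at $a$ is meaningful); running the line through a point $b\in U$ and through $a$ itself secures both at once. (We use that $\bK$ is infinite — a nonzero one-variable polynomial has finitely many roots — which holds since $\bK$ has characteristic zero in the situations where this lemma is applied.)
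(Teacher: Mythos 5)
Your proof is correct and takes essentially the same route as the paper: restrict $f$ to a line through $a$ that meets $U$, compare the pulled-back representation $G/H$ with the polynomial representation of the regular function $f|_\ell$ on an infinite subset to get a polynomial identity, and evaluate at $a$. The only cosmetic difference is that you invoke Lemma~1 to get a globally nowhere-vanishing denominator $q$, whereas the paper only uses regularity of $f|_L$ at $a$ (a local representation $V/W$ with $W(a)\neq 0$), which suffices equally well.
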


\begin{proof} 
Let $U$ be an open set such that $H(x)\neq0$ and $f(x)=G(x)/H(x)$ for $x\in U$.
Take any $a\in\bK^n$ such that $H(a)\neq0$ and let $L\subset\bK^n$ be an affine line passing though $a$
which has a nonempty intersection with $U$. Then by assumption there exist polynomial functions
$V,W:L\to \bK$ such that $f|_L=V/W$, $W(a)\neq0$. Comparing the rational representations 
$\frac{G|_L}{H|_L}$ and $\frac{V}{W}$ of $f|_L$ we get the polynomial equality 
$W\cdot G|_L=V\cdot H|_L$. 
Hence $f(a)=V(a)/W(a)=G(a)/H(a)$.
\end{proof}

Observe that every affine line in $\bK^n$ is contained in some vector plane. 
Hence any function that satisfies assumptions of Theorem~\ref{Tw} satisfies also 
assumptions of Lemma~\ref{lem0}.

\begin{Lemma}\label{lem1} 
If Theorem~\ref{Tw} holds for functions with  rational representations $G/H$, where
 $H\in\mathbb{K}[x_1,\dots,x_n]$ is irreducible, then it holds in full generality. 
\end{Lemma}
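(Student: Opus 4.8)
The approach I would take is to reduce first to a reduced fraction, then to translate ``regular at the origin'' into a nonvanishing statement about the denominator, and finally to strip the irreducible factors of $H$ one at a time. Using the reduction noted in the introduction we may assume $\gcd(G,H)=1$ in $\bK[x_1,\dots,x_n]$. Since $\bK$ has characteristic zero it is infinite, and since every affine line lies in some vector plane the hypotheses of Lemma~\ref{lem0} are satisfied; hence $f(x)=G(x)/H(x)$ for every $x$ with $H(x)\neq0$. Consequently, if $H(0)\neq0$, then $f$ agrees with the rational function $G/H$ on the open neighborhood $\{H\neq0\}$ of the origin, so $f$ is regular at $0$. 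Thus it is enough to prove $H(0)\neq0$.

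Next I would record the following fact: if a function $g:\bK^n\to\bK$ has a reduced rational representation $P/Q$ and is regular at the origin, then $Q(0)\neq0$. Indeed, regularity gives $g=A/B$ on an open neighborhood of $0$ with $B$ nowhere zero there; after cancelling common factors we may take $A$, $B$ coprime (a factor of a nowhere-vanishing polynomial is itself nowhere vanishing on that neighborhood), and comparing $A/B$ with $P/Q$ on a nonempty open set yields the polynomial identity $PB=QA$ because $\bK$ is infinite. Unique factorization in $\bK[x_1,\dots,x_n]$ then forces $Q=cB$ for some $c\in\bK$, $c\neq0$, whence $Q(0)=cB(0)\neq0$.

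For the main step, factor $H=H_1^{a_1}\cdots H_r^{a_r}$ into powers of pairwise non-associated irreducible polynomials. Fix an index $i$ and put $\tilde f:=f\cdot(H/H_i)$, where $H/H_i\in\bK[x_1,\dots,x_n]$. On the open set where $f=G/H$ we have $\tilde f=G/H_i$ and $H_i\neq0$, so $\tilde f$ has the rational representation $G/H_i$, which is reduced because $\gcd(G,H)=1$ implies $\gcd(G,H_i)=1$. Moreover, for every vector plane $L$ the restriction $\tilde f|_L$ is the product of the regular function $f|_L$ and the polynomial function $(H/H_i)|_L$, hence regular. So $\tilde f$ meets the hypotheses of Theorem~\ref{Tw} with an irreducible denominator, and by assumption $\tilde f$ is regular at $0$; by the previous paragraph $H_i(0)\neq0$. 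Since $i$ was arbitrary, $H(0)=\prod_{i}H_i(0)^{a_i}\neq0$, and we are done.

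The one point to watch is the treatment of multiplicities: one divides $f$ by $H/H_i$, removing a single copy of $H_i$, rather than by $H/H_i^{a_i}$; this is precisely what makes the new denominator irreducible rather than merely a prime power, so that the assumed case of Theorem~\ref{Tw} applies. I do not expect any other genuine obstacle, since the remaining reasoning uses only Lemma~\ref{lem0} and unique factorization in $\bK[x_1,\dots,x_n]$.
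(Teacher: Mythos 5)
Your proof is correct, but it takes a genuinely different route from the paper's. The paper proves Lemma~\ref{lem1} by induction on the number of irreducible factors of $H$: picking an irreducible factor $H_1$ with $H_1(0)=0$, it writes $H=H_1^kH_2$, applies the inductive hypothesis to the two auxiliary functions $H_1^kf$ and $H_2f$ (with a separate case when $H$ is a constant times a power of a single irreducible), and combines the resulting local representations through a polynomial identity to produce a representation of $f$ whose denominator does not vanish at $0$. You bypass the induction entirely: after reducing to $\gcd(G,H)=1$, for each irreducible factor $H_i$ you multiply $f$ by the polynomial $H/H_i$, note that the product still restricts to a regular function on every vector plane and has the reduced representation $G/H_i$, invoke the assumed irreducible case of Theorem~\ref{Tw}, and then use a uniqueness-of-reduced-representation argument (the same coprimality-plus-UFD comparison as in the paper's introductory Lemma) to get $H_i(0)\neq0$; since this holds for every $i$, $H(0)\neq0$, and Lemma~\ref{lem0} gives regularity at the origin. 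Both arguments exploit the same mechanism---multiplying $f$ by factors of $H$ preserves the hypotheses of Theorem~\ref{Tw}---but you apply it once per irreducible factor instead of recursively, which shortens the proof and eliminates the case split; the modest price is the preliminary coprimality reduction and the auxiliary fact that regularity at $0$ forces the reduced denominator to be nonzero at $0$, neither of which the paper's induction needs since it never assumes $G$ and $H$ coprime.
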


\begin{proof} 
We will proceed by induction on the number of irreducible factors of $H$. Suppose that Theorem~\ref{Tw} is true if $H$ has less than $s$ irreducible factors. Let us assume that $f$
with a rational representation $G/H$ fulfills the assumptions of Theorem~\ref{Tw} and $H$ has $s$ irreducible factors.

If  $H(0)\neq 0$, then by Lemma~\ref{lem0} the function $f$ is regular at zero. 
Otherwise there exists an irreducible factor  $H_1$ of  $H$ such that $H_1(0)=0$.

Write  $H=H_1^kH_2$, where $H_2$ is not divided by $H_1$. If  $H_2$ has positive degree, then  $H_1^kf$ and $H_2f$ fulfill the hypothesis and thus  $H_1^kf= V_1/W_1$, $H_2f=V_2/W_2$ 
in a neighborhood of $0\in\bK^n$.  Then

$$f=\frac{V_1}{H_1^kW_1}= \frac{V_2}{H_2W_2} 
$$
on non-empty open subset of $\mathbb{K}^n$. Hence $$H_1^kW_1V_2=H_2W_2V_1.$$  

Since $W_2(0)\neq 0$, we have that  $H_1$ does not divide $W_2$. Thus, by the above polynomial equality, we obtain that  $V_1=H_1^kV^{*}$  for some  $V^{*}\in\mathbb{K}[x_1,\dots,x_n]$ and then  
$f$ has a rational representation $V^{*}/W_1$. By Lemma~\ref{lem0} 
we get $f=V^{*}/W_1$ in a neighborhood of $0\in\bK^n$.

If  $H=aH_1^s$, where $a\in\mathbb{K}$, then by the hypothesis, we have
$$H_1^{s-1}f = \frac{G}{aH_1} = \frac{V}{W},
$$
on an open non-empty set. Similarly as before, we obtain  $G=aH_1V^{*}$ for some $V^{*}\in\mathbb{K}[x_1,\dots,x_n]$. Thus $f$ has a rational representation $V^*/H_1^{s-1}$.
Using the hypothesis we conclude that   $f$ is regular at zero.
\end{proof}

\begin{Lemma}\label{lem2}
Let $H:\bK^n\to \bK$ be an irreducible polynomial.  
Then after applying  some non-singular linear change of coordinates such that $\deg_{x_n} H=\deg H$  and  $H(0,\dots,0,x_n)$  has no multiple factors different from a power of $x_n$. 
\end{Lemma}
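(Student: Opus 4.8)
The plan is to achieve the two conditions one at a time, first arranging $\deg_{x_n}H = \deg H$, and then handling the condition on $H(0,\dots,0,x_n)$ by a further change of coordinates that fixes the "good" direction $x_n$. Let $d = \deg H$ and write $H = H_d + (\text{lower order terms})$, where $H_d$ is the nonzero homogeneous part of degree $d$. The coefficient of $x_n^d$ in $H$, after a linear substitution given by an invertible matrix $A$, is $H_d(\text{last column of }A)$; since $\bK$ has characteristic zero it is infinite, so the nonzero polynomial $H_d$ does not vanish identically, and we may choose a vector $v$ with $H_d(v)\neq 0$. Completing $v$ to a basis gives a non-singular linear change of coordinates after which $\deg_{x_n}H = d = \deg H$. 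Note that this operation preserves irreducibility, since applying an invertible linear change of variables is a ring automorphism of $\bK[x_1,\dots,x_n]$.

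Next I would keep $x_n$ fixed and only modify $x_1,\dots,x_{n-1}$, i.e. apply a change of coordinates of the form $x_i \mapsto x_i + c_i x_n$ (or a general invertible linear map on the first $n-1$ coordinates together with such shifts) — any such map still has $x_n$-degree equal to $\deg H$, so the first condition is not destroyed. Write $H$ as a polynomial in $x_n$ with coefficients in $\bK[x_1,\dots,x_{n-1}]$; its leading coefficient is a nonzero constant after the first step. Let $\Delta(x_1,\dots,x_{n-1}) \in \bK[x_1,\dots,x_{n-1}]$ be the discriminant of $H$ viewed as a polynomial in $x_n$. Because $H$ is irreducible of positive degree in $x_n$, it is separable over the fraction field $\bK(x_1,\dots,x_{n-1})$ (characteristic zero), so $\Delta$ is not the zero polynomial. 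The specialization $H(0,\dots,0,x_n)$ has a multiple factor other than a power of $x_n$ precisely when $H$ and $\partial H/\partial x_n$ have a common factor in $\bK[x_n]$ after setting $x_1=\dots=x_{n-1}=0$, of a form not accounted for by $x_n$ itself; the point is to arrange that the "origin" in the first $n-1$ coordinates is not a bad specialization point.

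The key step is therefore to choose the shift so that the new origin avoids the vanishing locus of the relevant discriminant-type polynomial. Concretely, set $\tilde H(x_1,\dots,x_n) = H(x_1 + c_1 x_n,\dots,x_{n-1}+c_{n-1}x_n, x_n)$; then $\tilde H(0,\dots,0,x_n) = H(c_1 x_n,\dots,c_{n-1}x_n,x_n)$, which is $H$ restricted to the line through the origin in direction $(c_1,\dots,c_{n-1},1)$. This restriction, as a polynomial in $x_n$, has multiple roots away from $x_n=0$ exactly for $(c_1,\dots,c_{n-1})$ in a proper Zariski-closed subset (the zero set of a nonzero polynomial obtained from $\Delta$ and the resultant of $H$ with $x_n$), using again that over a generic line an irreducible hypersurface meets it transversally; since $\bK$ is infinite we can pick $(c_1,\dots,c_{n-1})$ outside this set. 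The main obstacle I anticipate is bookkeeping the factor of $x_n$ correctly — separating the unavoidable vanishing of $H$ along $x_n = 0$ (when $H(0)=0$) from genuine multiplicities, and confirming that after factoring out the exact power of $x_n$ the remaining polynomial in $x_n$ is generically squarefree; this amounts to checking that the "non-$x_n$" part of $H(0,\dots,0,x_n)$ is controlled by a nonzero discriminant polynomial in the shift parameters, which holds by the separability argument above.
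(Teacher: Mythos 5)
Your overall strategy is the same as the paper's (apply a shear $x_i\mapsto x_i+c_ix_n$ and show that for generic $c=(c_1,\dots,c_{n-1})$ the restriction of $H$ to the line $x'=c\,x_n$ is squarefree apart from a power of $x_n$), but the key genericity claim is not actually established, and the tool you invoke is the wrong one. The discriminant $\Delta(x_1,\dots,x_{n-1})=\mathrm{discrim}_{x_n}H$, nonzero by separability of $H$ over $\bK(x_1,\dots,x_{n-1})$, controls specializations at \emph{fixed} points $x'=a$; it says nothing directly about the substitutions $x'=c\,x_n$, which mix $x_n$ into the coefficients. Worse, the naive discriminant of the restricted polynomial $g_c(x_n)=H(c_1x_n,\dots,c_{n-1}x_n,x_n)$ vanishes identically in $c$ whenever $\ord H\geq 2$, because $x_n^{\ord H}$ divides $g_c$ for every $c$. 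This is exactly the ``bookkeeping the factor of $x_n$'' obstacle you flag at the end, but your claim that it ``holds by the separability argument above'' is not a proof: separability of $H$ over $\bK(x')$ does not imply that $v^{-\ord H}H(u_1v,\dots,u_{n-1}v,v)$ has nonzero discriminant with respect to $v$. The paper supplies precisely this missing ingredient: it sets $\hat H(u,v)=v^{-\ord H}H(u_1v,\dots,u_{n-1}v,v)$, checks that $\hat H$ is irreducible in $\bK[u_1,\dots,u_{n-1},v]$ (via the invertible substitution $u_i=x_i/x_n$, $v=x_n$ and unique factorization; squarefreeness of $\hat H$ over $\bK(u)$ would already suffice), concludes $D=\mathrm{discrim}_v\hat H\not\equiv 0$, and then chooses $a$ with $h_m(a)D(a)\neq 0$, where $h_m$ is the leading $v$-coefficient of $\hat H$. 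Your alternative appeal to ``an irreducible hypersurface meets a generic line transversally'' is a Bertini-type statement that is essentially the content of the lemma, so invoking it here is circular in this elementary setting.

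A smaller slip: the assertion that \emph{any} shear $x_i\mapsto x_i+c_ix_n$ preserves $\deg_{x_n}H=\deg H$ is false. After the shear the coefficient of $x_n^{\deg H}$ is $H_d(c_1,\dots,c_{n-1},1)$, where $H_d$ is the leading form, and this can vanish (e.g. $H_d=x_n^2-x_1x_n$ with $c_1=1$). So this too must be imposed as a generic condition on $c$; the paper's requirement $h_m(a)\neq 0$ does exactly that, and it makes your two-step reduction unnecessary, since one generic shear achieves both properties simultaneously.
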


\begin{proof} 
Let $\tilde H(u_1,\dots,u_{n-1},v)=H(u_1v,\dots, u_{n-1}v,v)$
and $\hat H = v^{-\ord H}\tilde H$.   It is not difficult to check that $\hat H\in\mathbb{K}[u_1,\dots,u_{n-1},v]$ is irreducible. Let us write
$$ \hat H=h_mv^m+h_{m-1}v^{m-1}+\cdots+h_0,
$$
where $h_i\in \bK[u_1,\dots,u_{n-1}]$ for $i=0,\dots, m$ and $h_m\ne 0$.
Since $\hat H$ is irreducible,  its discriminant  $D=\mathrm{discrim}_v(\hat H)$ 
with respect to~$v$ is not equal to zero.   
Let $V\subset \bK^{n-1}$ be the set of points, where 
$h_m D\neq0$. Then, for every $(a_1,\dots,a_{n-1})\in V$, the polynomial  
$\hat H(a_1,\dots,a_{n-1},v)$ has no multiple factors. This implies that
$H(a_1v,\dots a_{n-1}v,v)=v^{\ord H}\hat H(a_1,\dots,a_{n-1},v)$ has no multiple factors different from a power of  $v$. Since $V$ is a non-empty open set, after a generic linear change of coordinates $x_i\mapsto x_i+a_ix_n$ for $i=1,\dots,n-1$ and $x_n\mapsto x_n$ we may assume that  $\deg_{x_n} H=\deg H$ and $H(0,\dots,0,x_n)$ has no multiple factors different from a power of  $x_n$. 
\end{proof} 

\begin{proof}[Proof of Theorem~\ref{Tw}] 
By Lemma~\ref{lem1} we can assume that $f$ has a rational representation  $G/H$, where $H$ 
is an irreducible polynomial and $G$, $H$ are coprime. We will prove that $H(0)\neq 0$.

Suppose to the contrary that $H(0)=0$. 
By Lemma~\ref{lem2}, we may assume
that $H$ is monic of degree $d>0$ with respect to~$x_n$,
and $ H(0,\dots,0,x_n)$ has no multiple factors different from a power of $x_n$. 

Consider the following division with remainder 
\begin{equation}\label{division}
G = qH+r
\end{equation}
in the ring $\bK[x_1,\dots,x_{n-1}][x_n]$.
Since $G$ and $H$ are coprime, the reminder  $r=r_kx_n^k+r_{k-1}x_n^{k-1}+\cdots+r_0$  is non-zero. Put $\tilde r_k\in \bK[x_1,\dots,x_{n-1}]$ to be the initial homogeneous form of~$r_k$. 

Let us  write $H(0,\dots,0,x_n)$ as a product
$$ \hat H_0(x_n)\cdots \hat H_s(x_n),$$
where $\hat H_0$ is a power of $x_n$ and if $s\geq 1$, then $\hat H_1,\dots, \hat H_s$ 
are relatively prime irreducible monic polynomials that do not vanish at zero.
By Hensel's lemma we obtain a factorization of $H$ in the ring $\bK[[x_1,\dots,x_{n-1}]][x_n]$.
$$  H= H_0\cdots  H_s,  
$$
where  $H_i(0,\dots,0,x_n)=\hat H_i(x_n)$ for $i=0,\dots ,s$.

\medskip
Given non-empty subset $I$ of $\{1,\dots, s\}$ define
$$A_I := \prod_{i\in I} H_i, \quad B_I:=\prod_{i\in \{0,\dots s\}\setminus I} H_i.
$$  
These are polynomials in the ring $\bK[[x_1,\dots,x_{n-1}]][x_n]$
of positive degrees such that $H=A_IB_I$.
Since $A_I$ and $B_I$ are not simultaneously elements of the ring $\bK[x_1,\dots,x_n]$, 
the polynomial $A_I\in\bK[[x_1,\dots,x_{n-1}]][x_n]$ has a non-zero coefficient
$t_I$ of a monomial $x_n^{m_I}$ for some $m_I<d$ which is not a polynomial i.e.  
$t_I\in\bK[[x_1,\dots,x_{n-1}]]\setminus \bK[x_1,\dots,x_{n-1}]$.  Hence 
there exists a non-zero homogeneous term $\tilde t_I$ of $t_I$ 
such that $\deg \tilde t_I>\deg_{x_1,\dots,x_{n-1}}H$.

\medskip
Let 
$$\pi=\tilde r_k \prod_{\emptyset \subsetneq I \subset \{1,\dots s\}} \tilde t_I .
$$

Take a point $a=(a_1,\dots,a_{n-1})\in\bK^{n-1}$ such that $\pi(a)\neq 0$ and consider a plane $L\subset \bK^n$ given by parametric equations $x_1=a_1u, \dots,x_{n-1}=a_{n-1}u, x_n=v$. 

Let us remark that for any power series 
$h=\sum c_{i_1,\dots,i_{n-1}}x_1^{i_1}\cdots x_{n-1}^{i_{n-1}}$ 
in $\bK[[x_1,\dots,x_{n-1}]]$ represented as a sum $h=\sum_{i=0}^{\infty} h_i$ of 
homogeneous polynomials  
$$h_i=\sum_{i_1+\cdots+i_{n-1}=i} c_{i_1,\dots, i_{n-1}}x_1^{i_1}\cdots x_{n-1}^{i_{n-1}}, 
\quad \mbox{for $i=0,1,\dots$}
$$
we have $h(a_1u,\dots,a_{n-1}u) =\sum_{i=0}^{\infty} h_i(a)u^i$
in the ring $\bK[[u]]$.

\medskip
By assumption there exist coprime polynomials $V,W:L\to\bK$ such that $W(0)\ne 0$ and
$\frac{G|_L}{H|_L} = \frac{V}{W}$ on a nonempty open subset of $L$.   Hence, we have
\begin{equation}\label{WGVH}
W\cdot  G|_L= V\cdot  H|_L.
\end{equation}
and
\begin{equation}\label{henselL}
H|_L=H_0|_L H_1|_L\cdots H_s|_L,
\end{equation}
where $H_0|_L,H_1|_L,\dots,H_s|_L$ are relatively prime  polynomials in $\mathbb{K}[[u]][v]$ and $H_i|_L$ are irreducible for $i=1,\dots,s$.

By the choice of the plane $L$, the polynomial $r|_L\in\bK[u,v]$ has 
a nonzero monomial $\tilde r_k(a)u^{\deg \tilde r_k}v^k$.  
It follows from~(\ref{division}) that $H|_L$ does not divide $G|_L$. 
Thus by~(\ref{WGVH}) the polynomial $W$ is a non-constant factor of $H|_L$. 
Since $W(0)\neq0$ and $H|_L(0)=0$, the polynomials $W$ and $H_0|_L$ are coprime.  
This implies that $s\geq 1$ and $W=A_I|_L$ for some subset $I$ 
such that $\emptyset \subsetneq I \subset \{1,\dots s\}$.

We proved that $A_I(a_1u,\dots,a_{n-1}u,v)\in\mathbb{K}[u,v]$.
It is impossible since $A_I(a_1u,\dots,a_{n-1}u,v)$ divides $H(a_1u,\dots,a_{n-1}u,v)$,
but in the first polynomial appears a monomial $\tilde t_I(a)u^{\deg \tilde t_I}v^{m_I}$
while the degree of the second polynomial with respect to~$u$ is not greater than  $\deg_{x_1,\dots,x_{n-1}}H$.  This contradiction finishes the proof.

\end{proof}

The above reasoning is based on the proof of \cite[Theorem 2]{kaltofen}, which is in fact the proof of some simple variation of the Bertini's theorem.

\medskip
Now we will prove Theorem \ref{Tw:nieprzeliczane}. We begin from two lemmas.

\begin{Lemma}\label{Podzial}
Let $\bK$ be an uncountable field and let  $\{A_i\}_{i\in I}$ be a countable division of the set $\bK^n$. 
Then there exists $i\in I$ such that for any polynomial  $f:\bK^n\to \bK$:  
if $f|_{A_i}=0$ then $f=0$.
\end{Lemma}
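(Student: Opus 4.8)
The plan is to use a pigeonhole-style argument on the countably many pieces $A_i$, exploiting that $\bK^n$ cannot be covered by countably many proper algebraic hypersurfaces when $\bK$ is uncountable. Concretely, I would argue by contradiction: suppose that for every $i\in I$ there is a nonzero polynomial $f_i:\bK^n\to\bK$ with $f_i|_{A_i}=0$, i.e.\ $A_i\subset Z(f_i):=\{x\in\bK^n : f_i(x)=0\}$. Since $I$ is countable, we obtain a countable family of proper algebraic hypersurfaces $\{Z(f_i)\}_{i\in I}$ whose union contains $\bigcup_{i\in I}A_i=\bK^n$.

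The key step is therefore to show that $\bK^n$ is not a countable union of proper algebraic subsets when $\bK$ is uncountable. I would prove this by induction on $n$. For $n=1$ each nonzero polynomial has finitely many roots, so a countable union of such zero sets is countable, while $\bK$ itself is uncountable. For the inductive step, suppose $\bK^n=\bigcup_{i}Z(f_i)$ with each $f_i\ne 0$. Pick a line (or a generic coordinate hyperplane) on which some particular $f_{i_0}$ does not vanish identically; more robustly, for each fixed value $c\in\bK$ of the last coordinate, consider the slice $\bK^{n-1}\times\{c\}$. The polynomial $f_i(x_1,\dots,x_{n-1},c)$ is either identically zero in $x_1,\dots,x_{n-1}$ or defines a proper hypersurface in $\bK^{n-1}$. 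If for every $c$ there existed some $i$ with $f_i(\cdot,c)\equiv 0$, then writing $f_i=\sum_j g_{ij}(x_1,\dots,x_{n-1})x_n^j$ we would get that all $g_{ij}$ vanish at $c$; the set of such bad $c$ for a fixed $i$ is the zero set of $\gcd$-type data, in any case contained in a finite set unless $f_i$ itself is divisible by no power of $x_n$ issue—more simply, $f_i(\cdot,c)\equiv 0$ for infinitely many $c$ forces $f_i=0$, a contradiction. Hence for all but countably many $c\in\bK$ (uncountably many remain), the slice $\bK^{n-1}\times\{c\}$ is covered by the proper hypersurfaces $\{Z(f_i(\cdot,c))\}_i$ of $\bK^{n-1}$, contradicting the inductive hypothesis. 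This yields the claim, and hence the lemma: some $A_i$ is not contained in any proper hypersurface, so any polynomial vanishing on it is the zero polynomial.

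The main obstacle is the bookkeeping in the inductive step: one must be careful that "$f_i$ restricted to the slice is identically zero" happens only for a set of slices that, summed over the countably many $i$, still misses uncountably many values of $x_n$—this is where uncountability of $\bK$ is used twice (once here and once in the base case). An alternative, cleaner route avoiding slicing is to invoke directly that over an uncountable field a countable union of proper algebraic subsets is proper, which is a standard fact; if the paper prefers self-containedness, the inductive slicing argument above makes it elementary. Either way, once the covering statement is established the lemma is immediate.
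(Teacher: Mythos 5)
Your proposal is correct, but it runs along a somewhat different track than the paper. You pass to the contrapositive: if every piece $A_i$ were contained in a proper hypersurface $Z(f_i)$, then $\bK^n$ would be a countable union of proper algebraic subsets, and you exclude this by induction on $n$ --- the base case by counting roots, and in the inductive step by discarding, for each $i$, the bad values $c$ with $f_i(\cdot,c)\equiv 0$ (these form a finite set, since infinitely many such $c$ would make every univariate polynomial $f_i(x',\cdot)$ vanish on an infinite set and hence force $f_i=0$), so that some slice $\bK^{n-1}\times\{c\}$ is covered by the proper hypersurfaces $Z(f_i(\cdot,c))$, contradicting the inductive hypothesis. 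The paper instead argues directly with the partition: it applies the inductive hypothesis to \emph{every} slice $\bK^{n-1}\times\{x\}$ to get an index $h(x)$, pigeonholes on $h:\bK\to\bN$ to find $k$ with $B_k=\{x: h(x)=k\}$ infinite, and shows this $A_k$ works, since a polynomial vanishing on $A_k$ vanishes on each slice over $B_k$ and then, restricted to vertical lines, vanishes on the infinite set $B_k$, hence identically. Both arguments are induction-plus-slicing with uncountability used to beat countably many exceptions; yours is modular in that it isolates the standard fact that $\bK^n$ over an uncountable field is not a countable union of proper Zariski-closed sets, while the paper's is direct (no contradiction) and exhibits the good index $k$ from the partition itself. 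The only blemish is the garbled clause about ``gcd-type data''; the clean justification is exactly the simpler one you state right after it, and it suffices because it makes the bad set of $c$ finite for each fixed $i$.
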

\begin{proof} 
We will proceed by induction on $n$.   

For $n=1$ any infinite set  $A_i$ from the family $\{A_i\}_{i\in I}$ fulfills the lemma, 
since every polynomial in one variable that vanishes on the infinite set is the zero polynomial. 

Assume that Lemma~\ref{Podzial} is true for $n-1$. Let $\{A_i\}_{i\in I}$ be a countable division 
of the set $\bK^n$. Without loss of generality, we may assume that  $I=\bN$. 
Define  $h: \bK\to\bN$ as follows: for $x\in\bK$ the value $h(x)$ is the smallest index  $i\in \bN$ such that the statement of Lemma~\ref{Podzial} holds 
for the division $\{A_i\cap (\bK^{n-1}\times \{x\})\}_{i\in\bN}$ of the set 
$\bK^{n-1}\times \{x\}$. By hypothesis the function $h$ is well-defined. Take any positive integer  $k$ such that the set $B_k=\{x\in\bK:h(x)=k\}$ is infinite. Let $f:\bK^n\to\bK$ be a polynomial such that  $f|_{A_k}=0$.   
For arbitrary  $b\in B_k$ the restriction 
$f|_{A_k\cap (\bK^{n-1}\times \{b\})}$ is equal to zero and thus
$f|_{\bK^{n-1}\times \{b\}}=0$. This implies that for every $a\in \bK^{n-1}$ the one-variable polynomial 
 $f|_{\{a\}\times\bK}$ vanishes on the infinite set $\{a\}\times B_k$.  
This means that $f|_{\{a\}\times\bK}=0$.  Since the point $a\in \bK^{n-1}$ was arbitrary, 
we obtain $f=0$. 
\end{proof}

\begin{Lemma}\label{Cramer}
Assume that   $f:\bK\to \bK$ is regular,  $f=G/H$,
$G(x)=b_0x^r+b_1x^{r-1}+\cdots+b_r$, 
$H(x)=x^s+c_1x^{s-1}+\cdots+c_s$ and $G/H$ is a reduced fraction.
Then, for any pairwise different elements $x_0,\dots, x_{r+s}$ of $\bK$ the  system of linear equations 
$$ B_0x_i^r+\cdots+B_r - f(x_i)(C_1x_i^{s-1}+\cdots +C_s) = f(x_i)x_i^s,\quad
\mbox{for $i=0,\dots, r+s$}
$$
with unknowns  $B_0,\dots, B_r,C_1,\dots,C_s$ has exactly one solution 
$$(B_0,\dots, B_r,C_1,\dots,C_s)=(b_0,\dots b_r,c_1,\dots,c_s).$$
\end{Lemma}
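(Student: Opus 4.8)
The plan is to read the linear system polynomially. To a tuple $(B_0,\dots,B_r,C_1,\dots,C_s)$ associate the polynomials $\tilde G(x)=B_0x^r+\cdots+B_r$ and $\tilde H(x)=x^s+C_1x^{s-1}+\cdots+C_s$; then the $i$-th equation, after moving $f(x_i)x_i^s$ to the left, is precisely $\tilde G(x_i)=f(x_i)\,\tilde H(x_i)$. Thus it suffices to prove two things: that $(b_0,\dots,b_r,c_1,\dots,c_s)$ is a solution, i.e.\ $G(x_i)=f(x_i)H(x_i)$ for every $i$; and that every solution equals this one. Since the system is square --- it has $r+s+1$ equations in the $r+s+1$ unknowns $B_0,\dots,B_r,C_1,\dots,C_s$ --- these two facts together give ``exactly one solution''.

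For the first fact, the only subtlety is that a priori $f$ agrees with $G/H$ merely on some nonempty Zariski-open subset of $\bK$, so one cannot just evaluate at the $x_i$ (which could, in principle, be zeros of $H$). I would remove this by invoking Lemma~1: since $f$ is regular, $f=G'/H'$ on all of $\bK$ with $H'$ nowhere vanishing. On the open set where both representations are valid we have $G/H=G'/H'$, hence $GH'=G'H$ there, and hence $GH'=G'H$ in $\bK[x]$ because a characteristic-zero field is infinite. Evaluating this polynomial identity at $x_i$ and dividing by $H'(x_i)\neq 0$ gives $f(x_i)H(x_i)=(G'(x_i)/H'(x_i))\,H(x_i)=G(x_i)$, which is precisely the $i$-th equation for $(B,C)=(b,c)$. (If $f=0$ the statement is trivial, so one may assume $G\neq 0$.)

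For uniqueness, take any solution, form $\tilde G,\tilde H$ as above, so that $\tilde G(x_i)=f(x_i)\tilde H(x_i)$ for all $i$. Multiplying by $H(x_i)$ and using the identity just established gives $\tilde G(x_i)H(x_i)=G(x_i)\tilde H(x_i)$ for $i=0,\dots,r+s$; but $\tilde G H-G\tilde H$ has degree at most $r+s$ while having the $r+s+1$ distinct roots $x_0,\dots,x_{r+s}$, so $\tilde G H=G\tilde H$ in $\bK[x]$. As $G$ and $H$ are coprime we get $H\mid\tilde H$, and since $\tilde H$ and $H$ are both monic of degree $s$ this forces $\tilde H=H$; cancelling $H$ in $\tilde G H=G\tilde H$ yields $\tilde G=G$, that is $(B_0,\dots,B_r,C_1,\dots,C_s)=(b_0,\dots,b_r,c_1,\dots,c_s)$. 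The linear-algebra rearrangement and the degree count are routine; the one point that genuinely needs care is the one flagged in the previous paragraph --- ensuring $G(x_i)=f(x_i)H(x_i)$ holds at \emph{every} $x_i$, not only on the open set where $G/H$ represents $f$ --- which is exactly where Lemma~1 is used.
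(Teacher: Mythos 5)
Your proof is correct and follows essentially the same route as the paper: read a solution as a pair $(\tilde G,\tilde H)$, observe that $\tilde H G-H\tilde G$ has degree at most $r+s$ and vanishes at the $r+s+1$ distinct points $x_i$, hence is identically zero, and conclude $\tilde G=G$, $\tilde H=H$ from coprimality and monicity. The only difference is cosmetic: the paper treats the hypothesis $f=G/H$ as an identity on all of $\bK$, so evaluation at every $x_i$ is immediate, whereas you re-justify this via Lemma~1; the extra detail you supply at the end ($H\mid\tilde H$ and both monic of degree $s$) is precisely what the paper leaves implicit.
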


\begin{proof} 
From the equality $f=G/H$ it is clear that $(b_0,\dots b_r,c_1,\dots,c_s)$ is a solution of the above system of linear equations. Assume that
$(\tilde b_0,\dots \tilde b_r,\tilde c_1,\dots,\tilde c_s)$ is an arbitrary solution of this system. 
Then for the polynomials 
$\tilde G(x)=\tilde b_0x^r+\tilde b_1x^{r-1}+\cdots+\tilde b_r$, 
$\tilde H(x)=x^s+\tilde c_1x^{s-1}+\cdots+\tilde c_s$
we have $G(x_i)=f(x_i)H(x_i)$ for $i=0,\dots,r+s$ and thus 
$\tilde H G- H\tilde G=0$ on the set $\{x_0,\dots, x_{r+s}\}$.
Since the left hand side of this equality is a polynomial of degree  $\leq r+s$, we have the identity  
$\tilde H G- H\tilde G=0$. This implies that $\tilde G=G$ and $\tilde H=H$.
\end{proof}

We will call the pair $(r,s)$ of positive integers from Lemma \ref{Cramer}
the {\it type} of  $f$. 
The Cramer's Rule implies that for a regular function $f:\bK\to \bK$ of the type $(r,s)$ 
the coefficients of $G$ and $H$ are rational functions of 
$x_0,\dots, x_{r+s},f(x_0),\dots,$ $ f(x_{r+s})$
with integer coefficients.

\begin{Theorem}\label{Proste}
Let  $\bK$ be an uncountable field and let  $f:\bK^n\to \bK$ be a function such that  for any affine line  $L$ in $\bK^n$ that is parallel to one of the axes, the restriction  $f|_L$ is regular.  
Then $f$ has a rational representation.
\end{Theorem}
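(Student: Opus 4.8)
The plan is to reduce the $n$-variable problem to a one-variable situation using Lemma \ref{Cramer}, and then to use the uncountability of $\bK$ together with Lemma \ref{Podzial} to extract a single algebraic locus on which a rational representation holds. First I would fix coordinates and consider, for each $a\in\bK^{n-1}$, the one-variable function $f_a(t):=f(a_1,\dots,a_{n-1},t)$, which by hypothesis is regular on $\bK$. By Lemma \ref{Cramer} each $f_a$ has a well-defined \emph{type} $(r,s)=(r(a),s(a))$, and the coefficients of its reduced numerator and denominator are fixed rational functions (with integer coefficients) of the sample data $t_0,\dots,t_{r+s}$ and $f_a(t_0),\dots,f_a(t_{r+s})$. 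Since there are only countably many possible types, the sets $\{a\in\bK^{n-1}: (r(a),s(a))=(r,s)\}$ form a countable partition of $\bK^{n-1}$. Inductively (the base case $n=1$ being trivial, as $f$ itself is then regular and has a rational representation) one wants to find one part of this partition that is Zariski-dense in $\bK^{n-1}$; this is exactly where Lemma \ref{Podzial} enters, applied to the countable partition of $\bK^{n-1}$ — after possibly refining it so that each piece is handled by the inductive hypothesis in the remaining variables.

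The key steps, in order, are: (1) Fix distinct sample points $t_0,\dots,t_N\in\bK$ for $N$ large enough to cover the type that occurs on a "large" set $A_{i_0}\subset\bK^{n-1}$ provided by Lemma \ref{Podzial}; on $A_{i_0}$ all the $f_a$ have the same type $(r,s)$ with $r+s\le N$. (2) Use Lemma \ref{Cramer}'s Cramer-rule formulas: the coefficient functions $a\mapsto b_j(a)$ and $a\mapsto c_j(a)$ of the reduced representation $f_a=G_a/H_a$ are rational functions in the variables $a_1,\dots,a_{n-1}$, because each $a\mapsto f(a_1,\dots,a_{n-1},t_m)$ is, by the inductive hypothesis applied in $\bK^{n-1}$, given by a rational representation there (here one uses that lines parallel to the axes in $\bK^{n-1}\times\{t_m\}$ are lines parallel to axes in $\bK^n$). (3) Assemble the bivariate-in-spirit object $G(x_1,\dots,x_n)=\sum_{j} b_j(x_1,\dots,x_{n-1})x_n^{r-j}$ and $H(x_1,\dots,x_n)=x_n^s+\sum_j c_j(x_1,\dots,x_{n-1})x_n^{s-j}$, clear denominators to get genuine polynomials, and check that $f=G/H$ holds on a nonempty Zariski-open subset of $\bK^n$ — namely over the open dense locus of $\bK^{n-1}$ where the $b_j,c_j$ are defined and $H$ does not vanish identically in $x_n$, intersected with the dense set $A_{i_0}$.

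The main obstacle I expect is the bookkeeping that makes the induction actually close: one must simultaneously control the type $(r,s)$ (so that the Cramer formulas have a fixed shape) \emph{and} arrange that the restriction-to-lines hypothesis is inherited by the $(n-1)$-variable slices so that the inductive hypothesis applies to the coefficient data. Concretely, the partition of $\bK^{n-1}$ by type must be intersected with the partition coming from the inductive step, and Lemma \ref{Podzial} must be invoked on the common refinement to produce a single part $A_{i_0}$ that is polynomial-test-dense; one then needs that, on this part, the finitely many functions $a\mapsto f(a,t_m)$ are \emph{each} rationally represented and the formulas glue. A secondary technical point is to verify that the rational function $a\mapsto H_a(t)$ one obtains really has the form $x_n^s+(\text{lower order})$ with coefficients that are honest rational functions (not merely defined pointwise), which is guaranteed by the uniqueness clause in Lemma \ref{Cramer} once the sample points are fixed. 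Once a rational representation is produced on a nonempty open set, nothing more is required, since that is exactly the definition of "has a rational representation."
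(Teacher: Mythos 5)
Your construction follows the paper's own proof very closely: you partition $\bK^{n-1}$ according to the type of $t\mapsto f(x',t)$, invoke Lemma~\ref{Podzial} to select one piece $A_{i_0}=A_{(r_0,s_0)}$ on which every vanishing polynomial vanishes identically, fix $r_0+s_0+1$ sample values of the last coordinate, apply the inductive hypothesis to the sample slices $\bK^{n-1}\times\{t_m\}$ (correctly observing that their axis-parallel lines are axis-parallel in $\bK^n$), and use the Cramer-rule formulas following Lemma~\ref{Cramer} to make the coefficients of the reduced one-variable representations rational functions of $x'$. Up to that point you and the paper are doing the same thing; also, your worry about refining the partition so that the induction applies is unnecessary, since the induction is applied to whole slices, never to the pieces $A_{(r,s)}$.

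The genuine gap is in your final step. The locus on which you have actually verified $f=G/H$ is contained in $A_{i_0}\times\bK$ (minus the loci where denominators vanish), and you declare this to be a nonempty Zariski-open subset of $\bK^n$. It is not: $A_{i_0}$ is a fiber of the type map, a completely arbitrary set which is ``dense'' only in the weak sense of Lemma~\ref{Podzial} (any polynomial vanishing on it is zero); it need not contain any nonempty open subset of $\bK^{n-1}$, and intersecting your open locus with it destroys openness. Since ``has a rational representation'' means, by definition, equality with $G/H$ on a nonempty \emph{open} set, this is precisely the point that still has to be proved, and it does not follow from what you have. The paper closes it with one more application of the inductive hypothesis: for an \emph{arbitrary} $c\in\bK$, the slice function $f|_{\bK^{n-1}\times\{c\}}$ has a rational representation $G_1/H_1$; since $G/H$ and $G_1/H_1$ take the same values on $A_{(r_0,s_0)}\times\{c\}$, the test-density of $A_{(r_0,s_0)}$ forces $GH_1-G_1H$ to vanish identically on that slice, so $G/H$ agrees with $G_1/H_1$, and hence with $f$, on open subsets of every slice rather than only over $A_{(r_0,s_0)}$. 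Your proposal is missing this transfer of the identity $f=G/H$ from the non-open set $A_{i_0}\times\bK$ to an open set, which is the step that actually makes $G/H$ a rational representation.
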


\begin{proof} We will proceed by induction on  $n$.  
For  $n=1$  the theorem is obvious.
Let $f:\bK^n\to \bK$ fulfills the assumption of the theorem. 
For $(x_1,\dots,x_{n-1},x_n)\in \bK^n$ we denote $x'=(x_1,\dots,x_{n-1})$. Let 
$$ A_{(r,s)} = \{x'\in \bK^{n-1}: f(x',\cdot):\bK\to \bK \mbox{ has the type $(r,s)$}\}.
$$
The family $\{A_{(r,s)}\}_{(r,s)\in \mathbb{N}^2}$ is a countable division of $\bK^{n-1}$. 
Take a pair $(r_0,s_0)\in\mathbb{N}^2$ which fulfills Lemma~\ref{Podzial}
and fix any  pairwise different elements $c_0,\dots, c_{r_0+s_0}$ of $\bK$. By the hypothesis, 
for every $i\in\{0,\dots,r_0+s_0\}$, the function
$f_i:=f(\cdot,c_i):\bK^{n-1}\to \bK$ has a rational representation. 
By the comment following  Lemma~\ref{Cramer},  $f$ 
restricted to the set $A_{(r_0,s_0)}\times  \bK$ has a rational representation $f=G/H$.
Fix $c\in \bK$.  By the hypothesis  $f|_{\bK^{n-1}\times\{c\}}$
has a form $G_1/H_1$. Since rational functions $G/H$ and $G_1/H_1$ have the same values on the set  $A_{(r_0,s_0)}\times\{c\}$, where $H_1\neq 0$, these functions are equal on $\bK^{n-1}\times\{c\}$. This implies that   $G/H$ is a rational representation of  $f$.
\end{proof}

In the language of \cite{Pa} Theorem~\ref{Proste} states that every separately regular function 
from $\bK^n$ to $\bK$ has a rational representation. Perhaps this theorem can be generalized 
to the product $K\times L$ of algebraic varieties as in~\cite{Pa}.


\medskip
Theorem~\ref{Tw:nieprzeliczane} follows easily from Theorems~\ref{Tw} and~\ref{Proste}.

\section{Acknowledgements}
This paper is a positive answer to the question of Wojciech Kucharz, which was presented to us in  private communication. We would like to thank Wojciech Kucharz for his valuable remarks.

\medskip
\noindent
{\small   Beata Gryszka\\
Institute of Mathematics\\
Pedagogical University of Cracow\\
Podchor\c{a}\.{z}ych 2\\
PL-30-084 Cracow, Poland\\
e-mail: bhejmej1f@gmail.com}

\medskip
\noindent
{\small   Janusz Gwo\'zdziewicz\\
Institute of Mathematics\\
Pedagogical University of Cracow\\
Podchor\c{a}\.{z}ych 2\\
PL-30-084 Cracow, Poland\\
e-mail: janusz.gwozdziewicz@up.krakow.pl}

\end{document}